\newtheorem{theorem}{Theorem}
\newtheorem{conjecture}{Conjecture} \setcounter{conjecture}{-1}
\newtheorem{counterexample}{Counterexample}\setcounter{counterexample}{-1}
\newtheorem{definition}[theorem]{Definition} 
\newtheorem{proposition}[theorem]{Proposition}
\newtheorem{corollary}[theorem]{Corollary}
\newtheorem{example}[theorem]{Example}
\title{Generalizing $p$-goodness to ordered graphs}
\begin{document}

\maketitle

\begin{center}
\large
Jeremy F.~Alm\footnote{The first, third, and fourth authors were supported in part by NSF REU Grant \# 1757717} \\ 
Department of Mathematics\\
 Lamar University\\
 Beaumont, TX 77710\\
USA \\
{\tt alm.academic@gmail.com} \\
\ \\
Patrick Bahls\\
  Department of Mathematics \\ University of North Carolina, Asheville \\ Asheville, NC 28804\\
  \texttt{pbahls@unca.edu}\\ 
  
\bigskip
Kayla Coffey\\
Department of Mathematics\\
 
 Stephen F.~Austin State University\\
 Nacogdoches, TX 75962\\
USA \\
{\tt kcoffey621@gmail.com } \\
\ \\
Carolyn Langhoff\\
Department of Computer Science\\
 Lamar University\\
 Beaumont, TX 77710\\
USA \\
{\tt clanghoff@lamar.edu } \\

\end{center}

\section{Introduction}

Let $R(G,H)$ denote the ordinary 2-color Ramsey number, i.e., $R(G,H)$ is the minimum $N$ such that  every 2-coloring of the edges of $K_N$ contains a red copy of $G$ or a blue copy of $H$. Given a connected graph $G$ of order $n$, the Tur\'an construction shows that 
 \[
        R(G,K_p) \geq (n-1)(p-1)+1.
    \]
Call $G$ \emph{$p$-good} if equality holds, i.e., 
 \[
        R(G,K_p) = (n-1)(p-1)+1.
    \]
    
It follows from [1] that no connected graph with a cycle can be $p$-good for all $p$, and it is not hard to show that every tree is $p$-good for all $p$. In Section \ref{sec:trees}, we give a new proof of this fact, since the new proof will inform our work on \emph{ordered} Ramsey numbers, which we turn to in Section \ref{sec:ord}.  In Section \ref{sec:char}, we attempt to characterize the ordered trees that are order-$p$-good for all $p$. Finally, in Section \ref{sec:goodness} we return to ordinary Ramsey numbers and find, for several graphs with a cycle, the maximum $p$ for which the graph is $p$-good. We conclude with some open problems in Section \ref{sec:Qs}.

\section{A new proof of a theorem on trees}\label{sec:trees}

\begin{enumerate}
\item We say that the connected graph $H$, $|V(H)|=n$, satisfies the \textit{embedding condition} (Emb) if for all graphs $G$ such that $\delta(G) \geq n-1$, $H$ embeds in $G$.  
\item We say that the connected graph $H$, $|V(H)|=n$, satisfies the \textit{$p$-goodness  condition} (PG) if $R(H,K_p)=(n-1)(p-1)+1$ for all complete graphs $K_p$, $p \geq 3$, i.e., $H$ is $p$-good for all $p$.
\end{enumerate}



\begin{theorem} \label{EmbImpliesOR}
If $H$ satisfies (Emb) then it also satisfies (PG).
\end{theorem}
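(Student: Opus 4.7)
The plan is to prove the upper bound $R(H,K_p) \leq (n-1)(p-1)+1$ by induction on $p$, with $n = |V(H)|$; the matching lower bound is the Turán construction already quoted in the introduction. The base case $p=2$ is immediate: $R(H,K_2) = n$ because any red-blue coloring of $K_n$ either has a blue edge or is monochromatic red, and in the latter case $H \hookrightarrow K_n$ follows from applying (Emb) to $G = K_n$, whose minimum degree is $n-1$.

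For the inductive step, assume $R(H,K_{p-1}) = (n-1)(p-2)+1$ and set $N = (n-1)(p-1)+1$. Fix a red-blue coloring of $K_N$ with no blue $K_p$, and let $G_R$ denote the red subgraph on the same vertex set. The case split is driven by the minimum red degree: if $\delta(G_R) \geq n-1$, then (Emb) delivers a red copy of $H$ directly and we are done.

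Otherwise, there is a vertex $v$ whose red degree is at most $n-2$, hence whose blue degree is at least $N-1-(n-2) = (n-1)(p-2)+1$. Let $B$ be the blue neighborhood of $v$. By the inductive hypothesis applied to the induced coloring on $B$, there is either a red $H$ (done) or a blue $K_{p-1}$; in the latter case, adjoining $v$ produces a blue $K_p$, contradicting our assumption. Either horn of the dichotomy yields the desired monochromatic subgraph, completing the induction.

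The argument is essentially a textbook neighborhood-chasing Ramsey induction, so the only nontrivial ingredient is the first case of the split, where (Emb) is used in its natural form. The main thing to get right is the arithmetic $N-1-(n-2) = (n-1)(p-2)+1$, which is exactly the inductive threshold, so no slack is lost at any step; this is why the $(\text{Emb}) \Rightarrow (\text{OR})$ implication holds with equality rather than with a weaker bound.
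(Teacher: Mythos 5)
Your proof is correct and follows essentially the same strategy as the paper's: induction on $p$, splitting on whether the red subgraph has minimum degree at least $n-1$ (in which case (Emb) applies) or some vertex has large blue degree (in which case the inductive hypothesis applies to its blue neighborhood). The only difference is that you start the induction at $p=2$, which slightly streamlines matters by absorbing the paper's separate $K_3$ base case into the general step; your arithmetic is also stated more cleanly than the paper's.
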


\begin{proof}
Suppose that a connected graph $H$, $|H|=n$, satisfies (Emb). Our note above implies we only need show $R(H,K_m) \leq (n-1)(m-1)+1$.

We proceed by induction on $n$, with base case $m=3$. Let $N=2(n-1)+1$ and color the edges of $K_N$ red and blue ($r$ and $b$). If there is a vertex $v \in V(K_N)$ such that $d_b(v) \geq n$ then any blue edge induced by $N_b(v)$ yields a blue $K_3$, so may assume all edges induced by $N_b(v)$ are red. This gives a red copy of $K_n$, which must contain a red copy of $H$. Thus we may assume that $\Delta_b(K_N) \leq n-1$, so $\delta_r(K_N) \geq 2(n-1)-n-1 = n-1$, so the red edges of $K_N$ give a graph which must contain a copy of $H$, since $H$ satisfies (Emb).

Assuming we've established the result for a given $p$, let us suppose $N=(n-1)(p+1-1)+1=p(n-1)+1$ and once again color $K_N$ red and blue. If $\delta_r(G) \geq n-1$, (Emb) again gives us a red copy of $H$. Thus we suppose $\delta_r(K_N) \leq n-2$, so that $\Delta_b(K_N) \geq p(n-1)-(n-2) = (n-1)(p-1)+1$. Thus for some vertex $v$, $d_b(v) \geq (n-1)(p-1)+1$ and by the induction hypothesis the blue neighborhood $N_b(v)$ induces a graph containing either a red $H$ or a blue $K_{p-1}$. In the former case we are done and in the latter case this copy of $K_{p-1}$, along with $v$, forms the blue $K_p$ needed, and we are done.
\end{proof}

Bohman and Keevash  \cite{Bohman} have the following result, which implies superlinearity of $R(C_\ell,K_p)$:

\begin{theorem}\label{lowerBound}
For fixed $\ell\geq 4$ and $p\to\infty$, we have $R(C_\ell,K_p) = \Omega\left(p^\frac{\ell-1}{\ell-2} / \log p\right)$.
\end{theorem}

\begin{corollary}
Let $G$ be connected of order $n$. Then if $G$ contains a cycle, $R(G,K_p)$ grows faster than any function that is linear in $p$. Hence if $R(G,K_p)$ is linear in $p$, $G$ is a tree.  Therefore (PG) implies that $G$ is a tree.
\end{corollary}

\begin{theorem} \label{TreesSatisfyEmb}
Let $H$ be a simple connected graph. Then $H$ satisfies (Emb) if and only if $H$ is a tree.
\end{theorem}

\begin{proof}
Let $|H|=n$ and first suppose $H$ is not a tree, and therefore contains a cycle. Let $c$ denote the maximum length of a cycle in $H$. By a probabilistic argument (see \cite{Bollobas}, for example) there exists a graph $G$ such that $\delta(G) \geq n-1$ and with girth at least $c+1$. $H$ cannot embed in such a graph, showing that (Emb) does not hold.

Now suppose $H$ is a tree and that $G$ satisfies $\delta(G) \geq n-1$. We prove a stronger condition than (Emb), namely that given $u \in V(H)$ and $v \in V(G)$ there exists an embedding of $H$ into $G$ such that $u \mapsto v$. We prove this by induction, the base case $n=2$ being trivial. Assume the result for a given $n$ and let $|H|=n+1$ and $G$ such that $\delta(G) \geq n = (n+1)-1$. Let $H' = H \setminus \{w\}$ for some leaf $w \neq u$. Let $N(w) = \{w'\}$. By inductive hypothesis we may embed $H'$ in $G$, taking $u$ to $v$ and $w'$ to $x$ for some $x \in V(G)$. Since $d(x) \geq n$ and $|N(w') \setminus \{w\}| \leq n-1$, after embedding $H'$ in $G$ at least one vertex remains in $N(x)$ to which we may map $w$, finishing our embedding and our proof.
\end{proof}

Together our results give us a proof of an already-known fact, though it seems not to have been stated as such in the literature:\
\begin{theorem} \label{thm:equiv}
Let $G$ be a connected graph of order $n$. Then the following are equivalent:
\begin{enumerate}
    \item $G$ is a tree.
    \item $G$ satisfies (Emb).
    \item $G$ satisfies (PG).
\end{enumerate}

In particular, the collection of connected graphs that are $p$-good for all $p$ is precisely the collection of trees.
\end{theorem}


\section{Ordered Ramsey numbers}\label{sec:ord}

\begin{definition}
Let $G$ be a connected graph whose vertices are linearly ordered. Let $K_N$ have vertex set $\{0,1,\dots, n-1\}$. Let $r_<(G,K_p)$ denote the least $N$ such that if the edges of $K_N$ are colored in red or blue, either
\begin{enumerate}
    \item there is a red ordered copy of $G$, i.e., there is an order-preserving injection from $G$ to the red subgraph of $K_N$, or
    \item there is a blue copy of $K_p$.
\end{enumerate}
\end{definition}
Ordered Ramsey numbers are fairly new. The first author independently (re)discovered them while programming -- he was too lazy to code up all the ways to make a cycle out a given set of four vertices, so he wrote code to forbid only one such way, hence computing the ordered Ramsey number $r_<(C^<_4, C^<_4)$, where $C^<_4$ has the ``monotone'' ordering 0-1-2-3-0.
\begin{example}
While $R(C_4, K_4) = 10$, making $C_4$ 4-good, if $C_4^<$ is given the ``monotone'' ordering 0-1-2-3-0, then $r_<(C_4^<,K_4) = 14$. This ordered Ramsey number was computed via the SAT solver MiniSat. 
\end{example}

Now let us consider $p$-goodness for ordered graphs.

\begin{definition}
Let $G^<$ be an ordered graph on $n$ vertices. Then $G^<$ is called order-$p$-good if $r_<(G^<,K_p) = (n-1)(p-1)+1$.
\end{definition}

\begin{example}
Let $P_4^<$ be the monotone path 0-1-2-3.  Then $P_4^<$ is order-$p$-good for all $p$. 
\end{example}
In fact, all monotone paths are order-$p$-good for all $p$ (\cite{balko}, although it appears to be folklore).

\begin{example}
While all trees are $p$-good for all $p$, not all ordered trees are order-$p$-good for all $p$. For example, if $T^<$ is a path with ordering 0-3-1-2, then $r_<(T^<,K_4) = 11$, so $T^<$ is not even 4-good. 
\end{example}

We can, however, prove that a certain class of ordered graphs are order-$p$-good for all $p$. 

\begin{theorem}\label{thm:stars}
Let $S_n^<$ denote an $n$-vertex star whose highest vertex is its center. Then $r_<(S_n^<, K_p) = (n-1)(p-1)+1$. Hence $S_n^<$ is order-$p$-good for all $p$. 
\end{theorem}

\begin{proof}
We proceed by induction on $n$, the base case $n=2$ being trivial.

Suppose $p\geq 2$ and assume $r_<(S_{n-1}^<, K_p) = (n-2)(p-1)+1$. Consider an edge-coloring of $K_N$, $N=(n-1)(p-1)+1$, with no blue $K_p$. Let $X_0$ be the set of $(n-2)(p-1)$ lowest vertices, and let $x_0<x_1<\dots <x_{p-1}$ be the $p$ remaining highest vertices. Let $X_1=X_0 \cup \{x_0\}$. Since $|X_1|=(n-2)(p-1)+1$, the subgraph of $K_N$ induced by $X_1$ contains a red copy of $S_{n-1}^<$. Let $^1S_{n-1}^<$ be the copy with \emph{lowest center} (if there are multiple copies), and let $c_1$ be its center. 

Now  let $X_2=(X_1\setminus \{c_1\}) \cup \{x_1\}$. Again, since $|X_2|=(n-2)(p-1)+1$, the subgraph of $K_N$ induced by $X_2$ contains a red copy of $S_{n-1}^<$. Let $^2S_{n-1}^<$ be the copy with \emph{lowest center} (if there are multiple copies), and let $c_2$ be its center. Note that $c_2 > c_1$, since if $c_2 < c_1$, $c_2$ would have been chosen in the previous stage. 

Proceed in this fashion, setting $X_{k+1}=(X_k\setminus \{c_k\}) \cup \{x_k\}$, getting at total of $p$ copies $^kS_{n_1}^<$ with centers $c_k$, and $c_1<c_2< \dots < c_p$. Consider the subgraph induced by $\{c_1, \dots, c_p\}$. Since there is no blue $K_p$, there is some red edge $c_ic_j$, $i<j$, in this subgraph.  But then $^jS_{n-1}^<$, adjoined with $c_i$ and edge $c_ic_j$, is a red $S_n^<$. 

\end{proof}





\section{Characterizing ordered trees that are order-$p$-good for all $p$}\label{sec:char}
In this section, we eliminate several tempting-looking conjectural characterizations of trees that are order-$p$-good for all $p$.

Recall the characterization from Theorem \ref{thm:equiv}: the 
only connected graphs that are $p$-good for all $p$ are those that satisfy (Emb). This suggests the following conjecture:

\begin{conjecture}
    An ordered tree $T$ is order-$p$-good for all $p$ if and only if

$(C0)$: $T$ order-embeds in EVERY connected ordered graph with minimum degree $n-1$.
\end{conjecture}


\begin{counterexample} 
The monotone path $P_4^<$ of order 4 does not order-embed in the following graph. 
    \begin{figure}[H]
        \centering
        \includegraphics[width=1in]{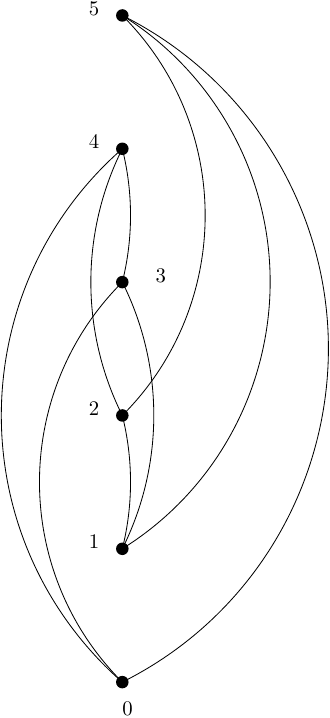}
        \caption{The monotone path on 4 vertices does not embed}
       
    \end{figure}
\end{counterexample}


\begin{conjecture}
    An ordered tree $T$ is order-$p$-good for all $p$ if and only if\\

    $(C1)$: there exists a vertex $v$ such that every subpath of $T$ ending at $v$ is monotone.

\end{conjecture}


\begin{counterexample}
The following tree satisfies (C1) but is not 4-good. 
    \begin{figure}[H]
        \centering
        \includegraphics[width=0.8in]{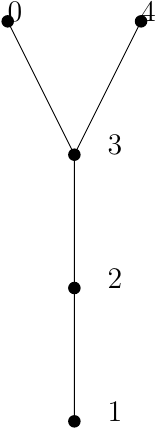}
        \caption{$T$ satisfies $(C1)$ but $r_<(T,K_4) = 16$, not 13}
       
    \end{figure}
\end{counterexample}


\begin{conjecture} 
    An ordered tree $T$ is order-$p$-good for all $p$ if and only if

    $(C2)$: Every subpath of $T$ is monotone.
\end{conjecture}


\begin{counterexample} 

The stars in Theorem \ref{thm:stars} serve as counterexamples to (C2).
    \begin{figure}[H]
        \centering
        \includegraphics[width=2in]{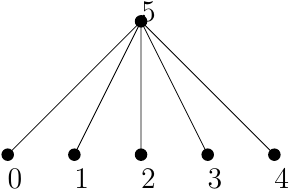}
        \caption{$(C2)$ is too strong. Every star with center its highest vertex is $p$-good for all $p$ but does NOT satisfy $(C2)$.}
       
    \end{figure}
\end{counterexample}


\section{Goodness of some small graphs}\label{sec:goodness}

Now we turn our attention back to unordered graphs. 
\begin{definition}
Let $H$ be a connected graph \emph{not} satisfying (PG), i.e.~$H$ is not a tree.  Then the \emph{goodness} of $H$ is the maximum $p$ such that $H$ is $p$-good.
\end{definition}

In this section,  let $H_1$ stand for $K_3$ with a pendant edge; let $H_2$ stand for $H_1$ with a pendant edge; let $H_3$ stand for $C_4$ with a pendant edge; let $H_4$ stand for $K_4$ with a pendant edge; let $H_5$ stand for $H_4$ with a pendant edge; let $H_6$ stand for $K_4 - e$ with a pendant edge; let $H_7$ stand for $H_6$ with a pendant edge.

\begin{proposition} \label{goodH1}
The goodness of $H_1$ is 4.
\end{proposition}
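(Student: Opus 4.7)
The plan is to verify (a) $R(H_1,K_3)=7$, (b) $R(H_1,K_4)=10$, and (c) $R(H_1,K_p)>3(p-1)+1$ for every $p\geq 5$. Items (a) and (b) give $3$- and $4$-goodness, while (c) shows $H_1$ fails to be $p$-good for any $p\geq 5$, pinning the goodness at exactly $4$.

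The central observation I would prove first is that a graph $G$ is $H_1$-free if and only if $G$ decomposes as a disjoint union of (possibly zero) isolated copies of $K_3$ together with a triangle-free graph: any triangle in $G$ with a single edge leaving it already forms a copy of $H_1$, so every triangle of $G$ must be a full connected component.

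The lower bounds $R(H_1,K_p)\geq 3(p-1)+1$ for $p=3,4$ come from the standard Tur\'an construction of $p-1$ disjoint red $K_3$'s noted in Section~\ref{sectionIntroduction}. For the upper bounds I would assume a red-$H_1$-free $2$-coloring of $K_N$ (with $N=7$ or $10$). By the observation the red subgraph splits as $t$ isolated red triangles $T_1,\dots,T_t$ together with a triangle-free remainder $G'$ on $N-3t$ vertices, and every edge between distinct $T_i$'s or between a $T_i$ and $G'$ is forced to be blue. A case analysis on $t$ then produces a blue $K_p$: when $t=0$, apply $R(3,3)=6$ or $R(3,4)=9$ directly to $G'$; when $t\geq 1$, pick one representative $v_i$ from each $T_i$ and adjoin a blue clique of size $p-t$ located inside $G'$ (supplied by $R(3,p-t)\leq |G'|$). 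The delicate case is $p=4$, $t=2$: here $G'$ has only four vertices, but Mantel's theorem guarantees at least two blue edges inside $G'$, any one of which combines with the two triangle representatives to give a blue $K_4$. I expect this $p=4$ case analysis to be the main (still elementary) obstacle.

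Finally, for (c) I would invoke $R(3,5)=14$: there is a triangle-free graph on $13$ vertices with independence number at most $4$. Coloring its edges red gives no red $H_1$ (red is even triangle-free) and no blue $K_5$, so $R(H_1,K_5)\geq 14>13$. More generally, any triangle-free red coloring is automatically $H_1$-free, so $R(H_1,K_p)\geq R(K_3,K_p)=R(3,p)$ for every $p$; since $R(3,p)>3(p-1)+1$ for all $p\geq 5$ (from the known values $R(3,5)=14$, $R(3,6)=18,\dots$ together with the super-linear growth of $R(3,p)$ guaranteed by Theorem~\ref{ORimpliesForest}), $H_1$ fails to be $p$-good for any $p\geq 5$.
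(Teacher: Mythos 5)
Your proof is essentially correct but takes a genuinely different route from the paper. The paper leans on the Burr--Erd\H{o}s pendant-edge recursion (equation~\eqref{eq:2}): it computes $R(H_1,K_4)=\max\{R(3,4),R(H_1,K_3)+3\}=10$ and $R(H_1,K_5)=\max\{R(3,5),13\}=14$, citing known classical values, and omits the case analysis for $R(H_1,K_3)=7$. You instead prove everything from scratch via the structural observation that a red-$H_1$-free graph is a disjoint union of isolated red triangles and a triangle-free remainder, then run the $t$-triangle case analysis against $R(3,p-t)$. Your characterization is correct, and all cases of (a) and (b) check out (in the $p=4$, $t=2$ case you do not even need Mantel: $R(3,2)=3\leq 4$ already forces a blue edge in $G'$). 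What your approach buys is self-containedness --- no appeal to \eqref{eq:2} --- at the cost of more casework; what the paper's approach buys is brevity and a recursion that immediately propagates failure of goodness upward in $p$.

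The one soft spot is part (c). The inequality $R(H_1,K_p)\geq R(3,p)$ is fine, and the known values handle $p\leq 9$, but ``superlinear growth of $R(3,p)$'' from Theorem~\ref{ORimpliesForest} only guarantees $R(3,p)>3(p-1)+1$ for \emph{sufficiently large} $p$ with unspecified constants, leaving a mid-range of $p$ uncovered; the generic probabilistic lower bound for $R(3,p)$ does not obviously beat $3p-2$ there. The clean patch stays entirely inside your own framework: given an extremal coloring of $K_{R(H_1,K_{p-1})-1}$ with no red $H_1$ and no blue $K_{p-1}$, adjoin a red triangle joined to everything else in blue. The red graph gains only an isolated triangle (so remains $H_1$-free), and a blue clique meets the new triangle in at most one vertex, so no blue $K_p$ appears. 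Hence $R(H_1,K_p)\geq R(H_1,K_{p-1})+3$, and induction from $R(H_1,K_5)\geq 14 > 3\cdot4+1$ gives $R(H_1,K_p)\geq 3p-1>3(p-1)+1$ for all $p\geq 5$. (This is exactly the content of one branch of \eqref{eq:2}, which is how the paper implicitly settles $p>5$.)
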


\begin{proof}
We may prove that $R(H_1,K_3)=7$.  This is a bit of case analysis, but the proof is straight-forward so we omit it.  It also follows from the following equality from \cite{BE}:

\begin{quote}
If $G$ is a connected graph on $n-1$ vertices, and $G_1$ is formed by adding to $G$ a pendant edge, then 
\end{quote}
\begin{equation}\label{eq:2}
R(G_1,K_p)=\max\{ R(G,K_p),R(G_1,K_{p-1})+n-1\}.
\end{equation}

Setting $p=4$ and $H_1=G$, we get
\begin{align*}
R(H_1,K_4) &=\max\{ R(3,4),R(H_1,K_3)+3\}\\
&= \max\{ 9,7+3\}\\
&= 10.
\end{align*}

Now $10=3\cdot 3+1=(|H_1|-1)(p-1)+1$, so $H_1$ is 4-good.  However, $H_1$ fails to be 5-good.  To see this, let $p=5$.  Then 
\begin{align*}
R(H_1,K_5) &=\max\{ R(3,5),R(H_1,K_4)+3\}\\
&= 14,
\end{align*}

which is one more than would be the case if $H_1$ were 5-good.

Therefore the goodness of $H_1$ is 4.

\end{proof}

In  Table \ref{Table} below, we use equation \eqref{eq:2} along with known Ramsey numbers from \cite{Rad} to compute the goodness of several graphs  on 4 or 5 vertices having pendant edges. The numbers in parentheses refer to what the Ramsey number would need to be for that  graph to be $p$-good for that particular value of $p$. The arguments are similar to those used in  the proof of Proposition \ref{goodH1}.

\begin{table}[h!b]
\centering
\begin{tabular}{|c|c|c|c|c|c|c|}
\hline & $K_3$ & $H_1$ & $H_2$ & $C_4$ & $H_3$ & $K_4$   \\ \hline
$R(\underline{\phantom{H}}, K_3 )$ & 6 & 7 & 9 & 7 & 9 & 9  \\ \hline 
$R(\underline{\phantom{H}}, K_4 )$ & 9 & 10 & 13 & 10 & 10 & 18  \\ \hline 
$R(\underline{\phantom{H}}, K_5 )$ & 14 & 14 (13) & 17 & 14 & 14 & 25 \\ \hline 
$R(\underline{\phantom{H}}, K_6 )$ & 18 & 18 & 21 & 18 & 18 & 35-41  \\ \hline 
$R(\underline{\phantom{H}}, K_7 )$ & 23 & 23 & 25 & 22 & 22 &   \\ \hline 
$R(\underline{\phantom{H}}, K_8 )$ & 28 & 28 & 29 & 26 & 26 &   \\ \hline 
$R(\underline{\phantom{H}}, K_9 )$ & 36 & 36 & 36 (33) & 30-32 & 32 &   \\ \hline 
$R(\underline{\phantom{H}}, K_{10} )$ & 40-43  & 34-39 & $\ge 36$  &  &  &   \\ \hline 
goodness & 2 & 4 & 8 & 4 & $\ge 9$ & 2  \\ \hline
\end{tabular}
\begin{tabular}{|c|c|c|c|c|c|}
\hline &  $H_4$ & $H_5$ & $K_4 - e$ & $H_6$ & $H_7$  \\ \hline
$R(\underline{\phantom{H}}, K_3 )$ &  9 & 11 & 7 & 9 & 11 \\ \hline 
$R(\underline{\phantom{H}}, K_4 )$ &  18 (13) & 18 (16) & 11 & 13 & 16  \\ \hline 
$R(\underline{\phantom{H}}, K_5 )$  & 25 &  & 16 & 17 & 21\\ \hline 
$R(\underline{\phantom{H}}, K_6 )$  &  &  & 21 & 21 & 26 \\ \hline 
$R(\underline{\phantom{H}}, K_7 )$  &  &    & 28-31 & $\ge 28$ (25)  & 31 \\ \hline 
$R(\underline{\phantom{H}}, K_8 )$  &  &  &  &    & \\ \hline 
$R(\underline{\phantom{H}}, K_9 )$  &  &  &  &    & \\ \hline 
$R(\underline{\phantom{H}}, K_{10} )$ &  &  &  &    & \\ \hline 
goodness  & 3 & 3 & 3 & 6 & $\ge 7$ \\ \hline
\end{tabular}
\caption{Goodness of some small graphs}
\label{Table}
\end{table}

\section{Open Questions}\label{sec:Qs}

\begin{enumerate}
    
    \item Characterize the ordered trees that are order-$p$-good for all $p$.
    
    Failing that,

    \item Characterize the ordered paths that are order-$p$-good for all $p$.

    \item Find a sufficient (or necessary!) condition for $r_<(G^<, K_p) > R(G, K_p)$.
\end{enumerate}

\end{document}